\documentclass{amsart}
\usepackage{amsfonts}
\usepackage{amssymb}
\usepackage{times}

\newtheorem{lemma}{Lemma}

\newtheorem{example}{Example}

\newtheorem{theorem}{Theorem}
  \newtheorem{corollary}{Corollary}
  \newtheorem{proposition}{Proposition}

  \newtheorem{definition}{Definition}

\newfont{\hueca}{msbm10}

\tolerance=5000 \topmargin 0cm \oddsidemargin=0,5cm
\evensidemargin=-0,2cm \textwidth 15.6cm \textheight 23cm

\begin{document}
\title{A linear algebra characterization of the semisimplity and the simplicity of arbitrary algebras}

\author{Antonio J.~Calder\'on Mart\'{i}n}
\address{Antonio J. Calder\'on Mart\'{i}n. \newline \indent Dpto. de Matem{\'a}ticas, Universidad de C\'adiz, Puerto Real (Spain).}
\email{{\tt ajesus.calderon@uca.es}}

\thanks{This work was  supported   by the PAI with project number FQM298 and by the Research Project `Operator Theory. an interdisciplinary approach. ProyExcel 00780'. }

\begin{abstract}

We show that an arbitrary   algebra ${ A}$, (of arbitrary dimension, over an
arbitrary base field and   any identity is not suppose for the product), is semisimple if and only if it has zero annihilator and admits a
semi-division linear basis. We also show that $A$ is simple if and only if it has zero annihilator and admits an
$i$-division linear basis.

\medskip

\textit{Keywords}: Arbitrary algebra, bilinear map, linear basis, semisimple algebra, simple algebra.

\textit{2020 MSC}: 15A03, 15A21, 17A01. \medskip
\end{abstract}

\maketitle


\section{Introduction and previous definitions}

We begin by noting that throughout this paper all of the algebras are
considered of arbitrary dimension, over an arbitrary base field ${\mathbb{%
K}}$ and any identity is not suppose for the product. The product in these algebras will be always denoted by
juxtaposition. That is:

\begin{definition}
Let $A$ be a vector space over an arbitrary base field ${%
\mathbb{K}}$. It is said that $A$ is an {\bf algebra} if it  endowed with a bilinear map

$$A \times A \to A$$
$$\hspace{0.3cm}(x,y)\mapsto x y.$$

called the {\bf product} of $A$.

\end{definition}

We recall that an algebra ${A}$ is said to be {\bf simple} if
its product is non-zero and its only ideals are $\{0\}$ and ${A}$. The algebra $A$ is called {\bf semisimple} if it can be
expressed as the direct sum of simple ideals. Also the {\bf Annihilator} of $A$ is the ideal   ${\rm Ann}(A)=\{x\in A: xA=Ax=0\}.$

\medskip

The aim of this paper is to extend the results in \cite{semi} from Lie-type algebras (see Definition \cite[Definition 1]{semi}) to arbitrary algebras. In \cite{semi} it is showed that a Lie-type algebra $L$ is semisimple if and only if ${\rm Ann}(L)=0$ and it admits a weak-division basis (see the below definition) , and that $L$ is simple if and only if ${\rm Ann}(L)=0$ and it  admits an $i$-division basis (see the below definition).

\medskip
\begin{definition}
A linear basis  ${\mathcal B}=\{e_i\}_{i\in I}$   of an arbitrary algebra  ${A}
$ is said to be an {\bf i-division basis} if for any $e_i \in {\mathcal{B}%
}$ and $x \in {A}$ such that

$$\hbox{$0\neq e_ix=c$ or $0\neq xe_i=c$}$$

we have that $e_i,x \in {\mathcal{I}}(c)$, where ${\mathcal{I}}(c)$ denotes
the ideal of ${A}$ generated by $c$. 
\end{definition}

Consider a linear basis ${\mathcal B}=\{e_i\}_{i\in I}$ of an arbitrary algebra $A$.  For any $e_i \in {\mathcal{B}}$,
let us denote by 

$$ {\mathcal{S}}_{e_i}:=\{e_j \in {\mathcal{B}}: e_ie_j \neq
0\} \cup \{e_k \in {\mathcal{B}}: e_ke_i \neq 0\}$$

 and by ${\mathcal{M}}%
_{e_i}$ the linear subspace of ${A}$ generated by ${\mathcal{S}}%
_{e_i}$, being ${\mathcal{M}}_{e_i}=\{0\}$ when ${\mathcal{S}}_{e_i}=
\emptyset$. 

\begin{definition}\label{Harbin50}
\label{weak}\textrm{Let ${\mathcal{B}}=\{e_i: i \in I\}$ be a linear basis
of an arbitrary algebra ${A}$. It is said that ${\mathcal{B}}$ is a
{\bf weak-division basis} of ${A}$ if for any $e_i \in {\mathcal{%
B}}$ and $x \in {\mathcal{M}}_{e_i}$ such that $0\neq e_ix=c$ or $0\neq
xe_i=c$, we have that $e_i,x \in {\mathcal{I}}(c)$. }
\end{definition}

It is clear that any $i$-divison algebra is a weak-division algebra, but the converse does not hols (see Remark 1 in  \cite{semi}).

We note that the existence of a weak-division basis for a non Lie-type algebra $A$ does not characterize, in general, the semisimplicity of $A$ as the next example shows.

\begin{example}\label{ex1}
Consider the complex (non Lie-type algebra) algebra $A= {\mathbb C} \oplus {\mathbb C}$ with basis ${\mathcal B}=\{(1,0), (0,1)\}$ and nonzero products among these elements:

\medskip

$(1,0)(1,0)=(1,0)$

$(0,1)(0,1)=(1,1)$

\medskip 

It is easy to verify that   ${\mathcal B}$ is a weak-division basis and that 
 $A$ is not semisimple.

\end{example}

Hence we need a  stronger condition to characterize the semisimplicity of an arbitrary algebra:

\medskip

Let ${\mathcal{B}}=\{e_i: i \in I\}$ be a linear basis
of an arbitrary algebra ${A}$.
For any $e_i \in {\mathcal{B}}$ we consider  the projection map on $\mathbb{K}e_i$
\begin{equation}\label{Harbin15}
    p_{e_i}:A \to {\mathbb K}
\end{equation}
  and define the set 
$$\hbox{${\mathcal{P}}_{e_i}:=\{e_j \in {\mathcal{B}}: p_{e_i}(e_je_r)\neq
0$ {{for some}} $e_r \in  {\mathcal{B}}
\} \cup \{e_k \in {\mathcal{B}}: p_{e_i}(e_se_k) \neq 0$ {{for some}} $e_s \in  {\mathcal{B}}\}$.}$$
Also we denote by ${\mathcal{P}}_{e_i}{\mathcal{P}}_{e_i}:=\{e_je_k:e_j,e_k \in {\mathcal{P}}_{e_i}\}\subset A.$

\begin{definition}
 The basis  ${\mathcal{B}}=\{e_i: i \in I\}$  of $A$ is said to be  a
{\bf semi-division basis}  if it is a weak-division basis; and for any $e_i \in {\mathcal{%
B}}$ we have that if  $b \in {\mathcal{P}}_{e_i}{\mathcal{P}}_{e_i}$ and $e_j \in S_{e_i}$ are such  that $0\neq be_j=c$ or $0\neq
e_jb=c$, then  $e_j,b \in {\mathcal{I}}(c)$.
\end{definition}


We want to prove the next results:

\medskip 
\hspace{-0.4cm}{\bf Theorem.} {\it Let $A$ be an arbitrary algebra. Then $A$ is semisimple if and only if  ${\rm Ann}({ A}) \neq 0$ and $A$ admits a   semi-division linear basis. }

\medskip 

\hspace{-0.4cm}{\bf Corollary.} {\it Let ${A}$ be an arbitrary  algebra. Then ${A}$ is
simple if and only if ${\rm Ann}({ A})=0$  and ${A}$ admits an i-division
basis.}

\medskip 
\medskip

The paper is organized as follows. In the next section we summarize some of the constructions developed in \cite{semi} that hold not only for Lie-type algebras but also for arbitrary algebras. Finally in Section 3 we prove the above characterization results.

\section{Background}

The proof of the characterization of the (semi)simplicity through the existence of certain linear bases for Lie-type algebras in \cite{semi} follows two steps. 

\bigskip

In the first one, (see \cite[Section 2]{semi}), fixed $A$  an  arbitrary algebra  and ${\mathcal B}=\{e_i\}_{i\in I}$ an arbitrary liner basis  of $A$, it is applied connection techniques on the set  ${\mathcal B}$. We note that these techniques were introduced in \cite{Yo2} for the study of the structure of split Lie algebras and have been developed and applied in many contexts  (see for instance \cite{Yo1, Yo3, Yo4, Yo5, Yo6, Yo7}). The tool consists in defining an adequate equivalence relation $\sim$ on ${\mathcal B}=\{e_i\}_{i\in I}$  in such a way that we get a quotient set 

\begin{equation*}
{\mathcal{B}} / \sim=\{[e_i]: e_i \in {\mathcal{B}}\}.
\end{equation*}

\smallskip

For any $[e_i] \in {\mathcal{B}} / \sim$ it is considered the linear subspace
\begin{equation*}
{A}_{[e_i]}:= \bigoplus\limits_{e_j \in [e_i]} {\mathbb{K}} e_j,
\end{equation*}
and we get that ${A}$ is the  direct sum of
linear subspaces,
\begin{equation}  \label{hel0}
{A}=\bigoplus\limits_{[e_i] \in {\mathcal{B}} / \sim} {A}%
_{[e_i]}.
\end{equation}

The above decomposition enjoys the next property:

\begin{lemma}[Lemma 6 in \cite{semi}]
\label{penu} Let ${A}$ be an arbitrary algebra admitting a
weak-division basis ${\mathcal{B}}=\{e_i:i \in I\}$ and let  ${I}$ be
 an ideal of ${A}$. If some   $e_i \in {%
\mathcal{B}}$ satisfies  $e_i \in {I}$, then ${A}_{[e_i]}
\subset {I}.$
\end{lemma}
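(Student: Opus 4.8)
The plan is to reduce the statement to a single ``one-step'' propagation result and then iterate it along the chains that define the equivalence relation $\sim$. Since $A_{[e_i]}=\bigoplus_{e_j\in[e_i]}\mathbb{K}e_j$ and $I$ is a linear subspace, it suffices to prove that every $e_j\in[e_i]$ already lies in $I$; the inclusion $A_{[e_i]}\subset I$ then follows at once by linearity.

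First I would isolate the key one-step claim: \emph{if $e_p\in I$ and $e_q\in\mathcal{S}_{e_p}$, then $e_q\in I$.} Indeed, $e_q\in\mathcal{S}_{e_p}$ means $e_pe_q\neq 0$ or $e_qe_p\neq 0$, and in either case $e_q\in\mathcal{M}_{e_p}$ because $\mathcal{M}_{e_p}$ is the span of $\mathcal{S}_{e_p}$. Put $c:=e_pe_q$ in the first case and $c:=e_qe_p$ in the second, so that $0\neq c$. Since $e_p\in I$ and $I$ is an ideal, $c\in I$, and hence $\mathcal{I}(c)\subset I$. Applying the weak-division hypothesis to the pair $(e_p,e_q)$ with $e_q\in\mathcal{M}_{e_p}$ and $0\neq e_pe_q=c$ (resp. $0\neq e_qe_p=c$) yields $e_p,e_q\in\mathcal{I}(c)$, and therefore $e_q\in I$.

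Next I would promote this to an arbitrary element of the class. By the construction of $\sim$, any $e_j\in[e_i]$ is joined to $e_i$ by a finite chain $e_i=e_{k_1},e_{k_2},\dots,e_{k_n}=e_j$ with $e_{k_{t+1}}\in\mathcal{S}_{e_{k_t}}$ for every $t$; here one uses that $e_q\in\mathcal{S}_{e_p}\iff e_p\in\mathcal{S}_{e_q}$, which is exactly what makes the adjacency symmetric and $\sim$ an equivalence relation. Starting from $e_{k_1}=e_i\in I$ and applying the one-step claim $n-1$ times along the chain, we get $e_{k_t}\in I$ for all $t$, and in particular $e_j\in I$. The degenerate situation $\mathcal{S}_{e_i}=\emptyset$ is immediate, since then $[e_i]=\{e_i\}$ and $A_{[e_i]}=\mathbb{K}e_i\subset I$.

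The only delicate point is the one-step claim, and inside it the observation that the witnessing product $c$ already belongs to $I$: this is precisely what forces $\mathcal{I}(c)\subset I$ and lets the weak-division property transport membership in $I$ from $e_p$ to $e_q$. Everything past that is a routine induction on the length of the connecting chain, so I expect no real obstacle beyond correctly bookkeeping the two product orders ($e_pe_q$ versus $e_qe_p$) in the one-step argument.
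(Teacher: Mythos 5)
Your argument is correct, with the caveat that the paper itself gives no proof of this lemma: it is imported verbatim as Lemma~6 of \cite{semi}, and the equivalence relation $\sim$ is never defined explicitly here (the paper only says an ``adequate'' relation is chosen so that the classes $[e_i]$ partition $\mathcal{B}$). You have therefore had to supply the intended definition yourself, namely that $[e_i]$ is the connected component of $e_i$ in the graph on $\mathcal{B}$ whose adjacency is $e_q\in\mathcal{S}_{e_p}$; this is indeed the standard ``connection'' relation of \cite{semi} and of the split-algebra literature the paper cites, and it is the only reading under which the lemma is true. Granting that, your one-step claim is exactly right: $e_q\in\mathcal{S}_{e_p}$ puts $e_q\in\mathcal{M}_{e_p}$, the ideal property gives $c=e_pe_q$ (or $e_qe_p$) in $I$, hence $\mathcal{I}(c)\subset I$, and the weak-division hypothesis transports membership to $e_q$; the induction along the connecting chain, using the symmetry of the adjacency, then finishes the proof. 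This is essentially the argument of \cite{semi}, so there is no gap --- only the (unavoidable) reliance on a definition the present paper leaves implicit.
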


\medskip

In the second step, (see again \cite[Section 2]{semi}), it is  considered  the family of linear subspaces of ${A}$,
\begin{equation*}
{\mathcal{F}}:=\{{A}_{[e_{i}]}:[e_{i}]\in {\mathcal{B}}/\sim \},
\end{equation*}%
and it is  introduced a new equivalence relation, denoted by $\approx $, on ${%
\mathcal{F}}$ as follows.

\medskip
\begin{definition}
We will say that ${A}_{[e_i]} \approx {A}_{[e_j]}$ if and
only if either ${A}_{[e_i]} = {A}_{[e_j]}$ or there exists
a  sequence 
\begin{equation*}
([e_1],[e_2],...,[e_n]) 
\end{equation*}
with any $[e_k]\in {\mathcal{B}} / \sim,$ $k \in \{1,...,n\}$ and $n \geq 2$, such that

\medskip

\begin{enumerate}
\item[(i)] $A_{[e_1]}=A_{[e_i]}$ and $A_{[e_n]}=A_{[e_j]}.$

\item[(ii)] ${A}_{[e_k]}{A}_{[e_{k+1}]} + {A}%
_{[e_{k+1}]}{A}_{[e_k]} \neq 0$ for any $k \in \{1,...,n-1\}.$
\end{enumerate}
\end{definition} 

\medskip
Then  it is  considered  the quotient set
\begin{equation*}
{\mathcal{F}} / \approx=\{ [{A}_{[e_{i}]}]: {A}_{[e_{i}]}
\in {\mathcal{F}}\}
\end{equation*}
and for any $[{A}_{[e_{i}]}] \in {\mathcal{F}} / \approx$ it is  denoted by $%
{A}_{[[e_i]]}$ the linear subspace

$${A}_{[[e_i]]}:= \bigoplus\limits_{{A}_{[e_{j}]} \in [%
{A}_{[e_{i}]}]} {A}_{[e_{j}]}. $$

By Equation (\ref{hel0}) and the definition of $\approx$, we clearly have
$${A}=\bigoplus\limits_{[{A}_{[e_{i}]}] \in {\mathcal{F}} /
\approx}{A}_{[[e_{i}]]} $$

and

$${A}_{[[e_{i}]]} {A}_{[[e_{j}]]}=0 $$

when $[{A}_{[e_{i}]}] \neq [{A}_{[e_{j}]}]$.

\bigskip

Finally, it is showed in \cite{semi} that Lie-type identity and the weak-division character of $\mathcal{B}$ allows us to get the desired characterization of the siemisemplicity of al Lie-type algebra. However this characterizetion does not hold for arbitrary algebras as Example \ref{ex1} shows. Hence we need a deeper study to get a characterization of the semiplicity for arbritary algebras. This will be carry out in the next section.

\section{Main results}

Let   $A$ be an arbitrary algebra and ${\mathcal{B}}=\{e_i: i \in I\}$  a linear basis of $A$. By the previous section we can consider  the family of linear subspaces

$${\mathcal{G}}:=\{A_{[[e_{i}]]}:  [{A}_{[e_{i}]}] \in {\mathcal{F}} /
\approx\}.$$
in such a way that

\begin{equation}\label{harbin1}
{A}=\bigoplus\limits_{[{A}_{[e_{i}]}] \in {\mathcal{F}} /
\approx}A_{[[e_{i}]]}
\end{equation}

and 
\begin{equation}\label{orto}
A_{[[e_{i}]]} A_{[[e_{j}]]}=0
\end{equation}
when $A_{[[e_{i}]]}  \neq A_{[[e_{j}]]}$.

\bigskip

 Taking into account Equation (\ref{harbin1}), we can introduce  for any $A_{[[e_{i}]]}$,  the projection map $$\Pi_{A_{[[e_{i}]]}}: A \to A_{[[e_{i}]]}$$
and define the following relation on ${\mathcal{G}}.$

\begin{definition}\label{ane3}\rm

  We  will say that $A_{[[e_{i}]]}\equiv A_{[[e_{j}]]}$ if and
only if either $A_{[[e_{i}]]}= A_{[[e_{j}]]}$ or there exists
a sequence 
$$
(A_{[[e_{1}]]},A_{[[e_{2}]]},...,A_{[[e_{n}]]})
$$
 with $n \geq 2$ and any $A_{[[e_{i}]]} \in {\mathcal G} $ for $i \in \{1,...,n\}$ such that

\begin{itemize}
\item[(i)] $A_{[[e_{1}]]}=A_{[[e_{i}]]}$ and $A_{[[e_{n}]]}=A_{[[e_{j}]]}.$

\item[(ii)] $\Pi_{A_{[[e_{1}]]}}(A_{[[e_{2}]]} A_{[[e_{2}]]}) + \Pi_{A_{[[e_{2}]]}}(A_{[[e_{1}]]}A_{[[e_{1}]]}) \neq 0.$\\
  $\Pi_{A_{[[e_{2}]]}}(A_{[[e_{3}]]}A_{[[e_{3}]]}) + \Pi_{A_{[[e_{3}]]}}(A_{[[e_{2}]]}A_{[[e_{2}]]}) \neq 0.$\\
  $\vdots$\\
  $\Pi_{A_{[[e_{n-1}]]}}(A_{[[e_{n}]]}A_{[[e_{n}]]}) + \Pi_{A_{[[e_{n}]]}}(A_{[[e_{n-1}]]}A_{[[e_{n-1}]]}) \neq 0.$
\end{itemize}

\end{definition}

\begin{lemma}
 The relation  $\equiv$ on ${\mathcal{G}}$
is  an equivalence relation.
\end{lemma}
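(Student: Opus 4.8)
The plan is to verify the three defining properties of an equivalence relation---reflexivity, symmetry, and transitivity---directly from Definition \ref{ane3}. Reflexivity is immediate: for any $A_{[[e_i]]} \in {\mathcal G}$ we have $A_{[[e_i]]} = A_{[[e_i]]}$, which is exactly the first alternative in the definition, so $A_{[[e_i]]} \equiv A_{[[e_i]]}$.

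For symmetry, suppose $A_{[[e_i]]} \equiv A_{[[e_j]]}$. If the two subspaces coincide there is nothing to prove, so assume there is a connecting sequence $(A_{[[e_1]]}, A_{[[e_2]]}, \dots, A_{[[e_n]]})$ with $A_{[[e_1]]} = A_{[[e_i]]}$ and $A_{[[e_n]]} = A_{[[e_j]]}$ satisfying condition (ii). The key observation I would make is that condition (ii) is symmetric in each consecutive pair: the $k$-th link reads
$$\Pi_{A_{[[e_k]]}}(A_{[[e_{k+1}]]} A_{[[e_{k+1}]]}) + \Pi_{A_{[[e_{k+1}]]}}(A_{[[e_k]]} A_{[[e_k]]}) \neq 0,$$
and swapping the roles of $e_k$ and $e_{k+1}$ merely interchanges the two summands, leaving the sum unchanged. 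Hence the reversed sequence $(A_{[[e_n]]}, A_{[[e_{n-1}]]}, \dots, A_{[[e_1]]})$ is again a valid connecting sequence, now witnessing $A_{[[e_j]]} \equiv A_{[[e_i]]}$.

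For transitivity, assume $A_{[[e_i]]} \equiv A_{[[e_j]]}$ and $A_{[[e_j]]} \equiv A_{[[e_l]]}$. Handling the degenerate cases first, if any of the equivalences is an equality of subspaces the conclusion reduces to the other hypothesis. Otherwise I would take a connecting sequence from $A_{[[e_i]]}$ to $A_{[[e_j]]}$ and a connecting sequence from $A_{[[e_j]]}$ to $A_{[[e_l]]}$, and concatenate them, identifying the final term of the first (namely $A_{[[e_j]]}$) with the initial term of the second. The concatenated sequence starts at $A_{[[e_i]]}$, ends at $A_{[[e_l]]}$, and each consecutive pair satisfies condition (ii) because it does so in one of the two original sequences; thus it witnesses $A_{[[e_i]]} \equiv A_{[[e_l]]}$.

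Since all three properties hold, $\equiv$ is an equivalence relation on ${\mathcal G}$. I expect no serious obstacle here: the argument is the standard pattern for showing that a reflexive-symmetric-transitive closure of a symmetric adjacency condition is an equivalence relation. The only point requiring a moment of care is verifying that condition (ii) is genuinely symmetric under reversal of each link, which is precisely what makes the reversed and concatenated sequences legitimate; this is immediate from the $+$ structure of the defining inequality, so the verification is routine.
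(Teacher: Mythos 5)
Your proof is correct and follows essentially the same route as the paper's: reflexivity from the first alternative of the definition, symmetry by reversing the connecting sequence, and transitivity by concatenating the two sequences at the common term $A_{[[e_j]]}$. Your explicit check that each link of condition (ii) is symmetric under swapping the two summands is a point the paper leaves implicit, but the argument is the same.
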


\begin{proof}
The relation $\equiv$ is reflexive by definition. To verify that $\equiv$ is symmetric, observe that if 
$A_{[[e_{i}]]}\equiv A_{[[e_{j}]]}$ through the  sequence
$$
(A_{[[e_{1}]]},A_{[[e_{2}]]},...,A_{[[e_{n}]]})
$$
with any $A_{[[e_{i}]]} \in {\mathcal G} $ for $i \in \{1,...,n\}$, 
 then   the sequence  $$
(A_{[[e_{n}]]},A_{[[e_{n-1}]]},...,A_{[[e_{1}]]}) 
$$
gives us $A_{[[e_{j}]]}\equiv A_{[[e_{i}]]}.$ 

\medskip
Finally, if $A_{[[e_{i}]]}\equiv A_{[[e_{j}]]}$ by means of 
$$
(A_{[[e_{1}]]},A_{[[e_{2}]]},...,A_{[[e_{n}]]})
$$ and $A_{[[e_{j}]]}\equiv A_{[[e_{k}]]}$ through
$$
(A_{[[e^{\prime}_{1}]]},A_{[[e_{2}^{\prime}]]},...,A_{[[e_{m}^{\prime}]]}),
$$

the sequence 

$$
(A_{[[e_{1}]]},A_{[[e_{2}]]},...,A_{[[e_{n}]]}, A_{[[e_{2}^{\prime}]]},...,A_{[[e_{m}^{\prime}]]})
$$
 gives us $A_{[[e_{i}]]}\equiv A_{[[e_{k}]]},$  taking into account  $A_{[[e_{j}]]}=A_{[[e_{n}]]}=A_{[[e^{\prime}_{1}]]}$. Hence   relation $\equiv$ is transitive.
\end{proof}
\medskip

From the above lemma,  we can consider the quotient set 

$$
({\mathcal{G}} / \equiv)=\{ [A_{[[e_{i}]]}]: A_{[[e_{i}]]}
\in {\mathcal{G}}\},
$$
and for any $[A_{[[e_{i}]]}] \in ({\mathcal{G}} / \equiv)$  denote by $%
A_{[[[e_i]]]}$ the linear subspace

\begin{equation}\label{final1}
A_{[[[e_i]]]}:= \bigoplus\limits_{A_{[[e_{j}]]} \in [%
A_{[[e_{i}]]}]}A_{[[e_{j}]]},
\end{equation}
being so  $A$ the direct sum of linear subspaces

\begin{equation}\label{final3}
{A}=\bigoplus\limits_{[A_{[[e_{i}]]}] \in ({\mathcal{G}} / \equiv)} A_{[[[e_i]]]}.
\end{equation}

Observe  that we also have 

\begin{equation}\label{final2}
A_{[[[e_i]]]}A_{[[[e_j]]]}=0
\end{equation}

when $A_{[[[e_i]]]} \neq A_{[[[e_j]]]}$ as consequence of Equation (\ref{final1}) and Equation (\ref{orto}).


\begin{proposition}\label{Harbin35}
Any of the linear subspaces $A_{[[[e_i]]]}$ given in Equation (\ref{final1}) is an  ideal of $A$.

\end{proposition}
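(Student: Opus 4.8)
The plan is to reduce everything to one structural fact about the second-level pieces $A_{[[e_j]]}$ that compose $A_{[[[e_i]]]}$ through Equation (\ref{final1}), and then to close up using the second-level orthogonality (\ref{orto}) together with the direct sum (\ref{harbin1}). First I would isolate the only nontrivial point as a claim: for every second-level piece $A_{[[e_j]]}$,
$$A_{[[e_j]]}A_{[[e_j]]} \subseteq A_{[[[e_j]]]},$$
that is, the square of a second-level piece cannot escape the third-level piece containing it.

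To establish this claim I would decompose $A_{[[e_j]]}A_{[[e_j]]}$ along the direct sum $A = \bigoplus A_{[[e_k]]}$ of Equation (\ref{harbin1}) and look at its projections. If $\Pi_{A_{[[e_k]]}}(A_{[[e_j]]}A_{[[e_j]]}) \neq 0$ for some $k$ with $A_{[[e_k]]} \neq A_{[[e_j]]}$, then the length-two sequence $(A_{[[e_j]]}, A_{[[e_k]]})$ satisfies condition (ii) of Definition \ref{ane3}, because the defining sum for that sequence contains the nonzero summand $\Pi_{A_{[[e_k]]}}(A_{[[e_j]]}A_{[[e_j]]})$. Hence $A_{[[e_j]]} \equiv A_{[[e_k]]}$, so $A_{[[e_k]]}$ lies in the $\equiv$-class of $A_{[[e_j]]}$ and therefore $A_{[[e_k]]} \subseteq A_{[[[e_j]]]}$ by (\ref{final1}). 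Since this holds for every index $k$ with nonzero projection (the case $A_{[[e_k]]} = A_{[[e_j]]}$ being contained in $A_{[[[e_j]]]}$ trivially), the whole of $A_{[[e_j]]}A_{[[e_j]]}$ lands inside $A_{[[[e_j]]]}$, proving the claim.

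The ideal property is then pure bookkeeping. Fixing a second-level piece $A_{[[e_j]]}$ and an arbitrary $b \in A$, I would write $b = \sum_k b_k$ with $b_k \in A_{[[e_k]]}$ using (\ref{harbin1}); since $A_{[[e_j]]}A_{[[e_k]]} = A_{[[e_k]]}A_{[[e_j]]} = 0$ for $A_{[[e_k]]}\neq A_{[[e_j]]}$ by (\ref{orto}), both $A_{[[e_j]]}b$ and $bA_{[[e_j]]}$ collapse to the single surviving term, giving $A_{[[e_j]]}b \subseteq A_{[[e_j]]}A_{[[e_j]]}$ and $bA_{[[e_j]]} \subseteq A_{[[e_j]]}A_{[[e_j]]}$, both contained in $A_{[[[e_j]]]}$ by the claim. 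Finally, by (\ref{final1}) the space $A_{[[[e_i]]]}$ is the sum of the second-level pieces $A_{[[e_j]]}$ in its $\equiv$-class, each of which satisfies $A_{[[[e_j]]]} = A_{[[[e_i]]]}$; summing the two inclusions above over this class yields $A_{[[[e_i]]]}A + AA_{[[[e_i]]]} \subseteq A_{[[[e_i]]]}$, so $A_{[[[e_i]]]}$ is an ideal.

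The one place demanding care is the claim, and specifically a correct reading of condition (ii) in Definition \ref{ane3}: the mechanism is that a nonzero projection of the self-product $A_{[[e_j]]}A_{[[e_j]]}$ onto a different piece is exactly what forces a direct $\equiv$-link, so such projections can never leave the $\equiv$-class. Note that no identity on the product is ever used, in keeping with the arbitrary-algebra setting; the whole argument rests only on bilinearity, the decomposition (\ref{harbin1}), and the orthogonality relation (\ref{orto}).
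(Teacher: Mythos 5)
Your proof is correct and follows essentially the same route as the paper's: the key step in both is that a nonzero projection $\Pi_{A_{[[e_k]]}}(A_{[[e_j]]}A_{[[e_j]]})$ yields the length-two sequence $(A_{[[e_j]]},A_{[[e_k]]})$ satisfying condition (ii) of Definition \ref{ane3}, forcing $A_{[[e_k]]}$ into the $\equiv$-class and hence into $A_{[[[e_j]]]}$. The only difference is organizational: the paper first reduces to $A_{[[[e_i]]]}A_{[[[e_i]]]}\subset A_{[[[e_i]]]}$ via the third-level orthogonality (\ref{final2}), whereas you work piecewise at the second level using (\ref{orto}) and (\ref{harbin1}) directly, which amounts to the same argument.
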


\begin{proof}

Let us fix some $A_{[[[e_i]]]}$ with $[A_{[[e_i]]}] \in  ({\mathcal{G}} / \equiv)$ (see Equation (\ref{final1})). Taking into account Equation (\ref{final3}) and 
Equation (\ref{final2}) we just have to show that 

\begin{equation}\label{final10}
A_{[[[e_i]]]}A_{[[[e_i]]]} \subset A_{[[[e_i]]]}.
\end{equation}

To do that, take $x,y \in A_{[[[e_i]]]}$ such that 

\begin{equation}\label{final15}
0 \neq xy=z\in A.
\end{equation}

Taking into account  Equations (\ref{orto}) and (\ref{final1}), on the one hand we have that $$x,y \in A_{[[e_j]]}$$ with 
\begin{equation}\label{final11}
A_{[[e_j]]}  \subset A_{[[[e_i]]]};
\end{equation}
and on the other hand, (considering also Equation (\ref{final3})), we can express  
\begin{equation}\label{final12}
z=z_{1}+z_{2}+\cdots +z_{n}
\end{equation}
with any $0\neq z_{k} \in A_{[[e_k]]} $  and $A_{[[e_k]]} \neq A_{[[e_t]]}$ when $k \neq t $, $k \in \{1,...,n\}.$
 
From here $$0\neq z_k \in \Pi_{A_{[[e_k]]}}({A}_{[[e_{j}]]}{A}_{[[e_{j}]]})$$
and so $A_{[[e_k]]} \equiv A_{[[e_j]]}$ for any $A_{[[e_k]]}$, $k \in \{1,...,n\}$. By Equation (\ref{final11}), we get $z_k \in A_{[[e_k]]} \subset  A_{[[[e_i]]]}$ for any $k \in \{1,...,n\}$. From here Equation  (\ref{final12}) allows us to assert  $z \in A_{[[[e_i]]]}$ and, taking into account Equation (\ref{final15}), we have that Equation (\ref{final10}) holds. Consequently any $A_{[[[e_i]]]}$ is an ideal of $A$.

\end{proof}
\begin{proposition}\label{Harbin40}
Suppose  ${\rm Ann}({ A}) \neq 0$ and the basis ${\mathcal B}$ is of semi-division. Then any of the ideals  $A_{[[[e_i]]]}$ given in Equation (\ref{final1}) is simple.

\end{proposition}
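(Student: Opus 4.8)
The plan is to establish, for each $A_{[[[e_i]]]}$, the two defining features of a simple algebra: that its product is non-zero and that it has no proper non-zero ideals (reading the hypothesis as ${\rm Ann}(A)=0$, as in the abstract). The first thing I would record is a reduction coming from Equations (\ref{final3}) and (\ref{final2}): since $A=\bigoplus A_{[[[e_j]]]}$ with $A_{[[[e_j]]]}A_{[[[e_l]]]}=0$ for $j\neq l$, a subspace $J\subseteq A_{[[[e_i]]]}$ is an ideal of the algebra $A_{[[[e_i]]]}$ if and only if it is an ideal of $A$; indeed, for $a\in A$ written as $a=\sum_l a_l$ with $a_l\in A_{[[[e_l]]]}$, all cross terms $a_lJ$ with $l\neq i$ vanish, so $aJ\subseteq A_{[[[e_i]]]}J\subseteq J$ and likewise $Ja\subseteq J$. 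Hence it suffices to prove that $A_{[[[e_i]]]}A_{[[[e_i]]]}\neq 0$ and that every non-zero ideal $J$ of $A$ with $J\subseteq A_{[[[e_i]]]}$ equals $A_{[[[e_i]]]}$. The non-zero product is immediate: if $A_{[[[e_i]]]}A_{[[[e_i]]]}=0$, then (\ref{final2}) gives $A_{[[[e_i]]]}A=AA_{[[[e_i]]]}=0$, so $A_{[[[e_i]]]}\subseteq{\rm Ann}(A)=0$, contradicting that $A_{[[[e_i]]]}$ is spanned by basis vectors.

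For the second part I would fix a non-zero ideal $J\subseteq A_{[[[e_i]]]}$ and propagate membership in $J$ upward through the three layers of the construction. First I claim $J$ contains a basis element. Taking $0\neq x\in J$, the hypothesis ${\rm Ann}(A)=0$ produces a basis element $e$ with, say, $ex\neq 0$; discarding from $x$ the basis terms annihilated by $e$ leaves an element $x'\in\mathcal{M}_{e}$ with $ex=ex'\neq 0$, so the weak-division property (Definition \ref{weak}) forces $e\in\mathcal{I}(ex)\subseteq J$. The orthogonality (\ref{final2}) also gives $e\in A_{[[[e_i]]]}$. Once a basis element $e$ lies in $J$, Lemma \ref{penu} (which needs only the weak-division character of $\mathcal{B}$) yields $A_{[e]}\subseteq J$. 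I would then climb to the whole $\approx$-block: along a defining chain of $\approx$, condition (ii) furnishes basis elements $e_a\in A_{[e_k]}\subseteq J$ and $e_b\in A_{[e_{k+1}]}$ with $e_ae_b\neq 0$ (or $e_be_a\neq 0$); since this is a \emph{single} product, weak-division applies without cancellation and gives $e_b\in\mathcal{I}(e_ae_b)\subseteq J$, whence $A_{[e_{k+1}]}=A_{[e_b]}\subseteq J$ by Lemma \ref{penu}. Iterating along the chain produces $A_{[[e]]}\subseteq J$.

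It remains to climb the last layer, from an $\approx$-block $A_{[[e_k]]}\subseteq J$ to the full $\equiv$-block $A_{[[[e_i]]]}$, and this is where the extra semi-division clause enters. Using Definition \ref{ane3}, along a defining chain for $\equiv$ a non-zero projection $\Pi_{A_{[[e_{k+1}]]}}(A_{[[e_k]]}A_{[[e_k]]})\neq 0$ produces basis elements $e_a,e_{a'}\in A_{[[e_k]]}\subseteq J$ with $b:=e_ae_{a'}\in J$ and $p_{e_b}(b)\neq 0$ for some $e_b\in A_{[[e_{k+1}]]}$; this exhibits $b\in\mathcal{P}_{e_b}\mathcal{P}_{e_b}$. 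Because $e_b\notin{\rm Ann}(A)$ and second-level blocks are orthogonal, I would select $e_j\in\mathcal{S}_{e_b}\subseteq A_{[[e_{k+1}]]}$ with $be_j\neq 0$ (or $e_jb\neq 0$), note that $be_j\in J$, and invoke the semi-division condition to conclude $e_j\in\mathcal{I}(be_j)\subseteq J$. Lemma \ref{penu} then gives $A_{[e_j]}\subseteq J$, and the already-established $\approx$-propagation lifts this to $A_{[[e_{k+1}]]}\subseteq J$. Iterating along the $\equiv$-chain yields $A_{[[[e_i]]]}\subseteq J$, hence $J=A_{[[[e_i]]]}$, and $A_{[[[e_i]]]}$ is simple.

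I expect the genuine obstacle to be this final $\equiv$-step, and more precisely the choice of $e_j$. The element $b$, although a single product, is in general a non-trivial linear combination of basis vectors spread across several second-level blocks, so guaranteeing simultaneously that $e_j\in\mathcal{S}_{e_b}$ and that $be_j\neq 0$ (rather than vanishing through cancellation among the basis components of $b$) is exactly the point at which the semi-division hypothesis, phrased through $\mathcal{P}_{e_b}\mathcal{P}_{e_b}$ and $\mathcal{S}_{e_b}$, must be applied with care; this, together with ${\rm Ann}(A)=0$ to secure the needed non-zero product, is the technical heart of the argument. By contrast, the non-zero product, the first extraction via the $x'\in\mathcal{M}_{e}$ trick, and the $\approx$-propagation are comparatively routine once the reduction is in place.
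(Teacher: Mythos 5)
Your overall architecture --- reduce to ideals of $A$ contained in $A_{[[[e_i]]]}$, extract one basis element via weak division, then climb through $A_{[e]}$, $A_{[[e]]}$ and $A_{[[[e]]]}$ using Lemma \ref{penu}, the $\approx$-chains and the $\equiv$-chains respectively --- is exactly the paper's, you correctly read the hypothesis as ${\rm Ann}(A)=0$, and your two explicit additions (that ideals of $A_{[[[e_i]]]}$ coincide with ideals of $A$ contained in it, and that $A_{[[[e_i]]]}A_{[[[e_i]]]}\neq 0$, a point the paper's proof does not address even though it is part of simplicity) are correct and worth keeping. However, the one step in which the semi-division hypothesis is actually used is left unproved, and in the form you state it it does not go through. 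You fix the basis element $e_b\in A_{[[e_{k+1}]]}$ with $p_{e_b}(b)\neq 0$ \emph{first}, and then try to produce $e_j\in\mathcal{S}_{e_b}$ with $be_j\neq 0$ from $e_b\notin{\rm Ann}(A)$. That hypothesis only yields some $e_j$ with $e_be_j\neq 0$ or $e_je_b\neq 0$; writing $z=\Pi_{A_{[[e_{k+1}]]}}(b)=\sum_l\lambda_l e_l$, the relevant product is $be_j=ze_j=\sum_l\lambda_l e_le_j$, which can still vanish by cancellation among the other terms of $z$ --- exactly the failure mode you name in your last paragraph without resolving. So the pair (anchor element for $\mathcal{P}$, element of $\mathcal{S}$) needed to invoke the semi-division clause is never actually exhibited, and this is the technical heart of the proposition.

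The paper closes this by reversing the order of quantifiers. Since $p_{e_l}(b)=\lambda_l\neq 0$ for \emph{every} $e_l$ in the support of $z$, the membership $b\in\mathcal{P}_{e_l}\mathcal{P}_{e_l}$ holds simultaneously for all those $l$, so one is free to choose the anchor \emph{after} choosing the multiplier. Apply ${\rm Ann}(A)=0$ to $z$ itself (not to a single $e_b$) to obtain $e_i'$ with $ze_i'\neq 0$ or $e_i'z\neq 0$; orthogonality (\ref{orto}) forces $e_i'\in A_{[[e_{k+1}]]}$ and kills the components of $b$ outside that block, so $be_i'=ze_i'$ and the product with $b$ is automatically non-zero; and from $ze_i'=\sum_l\lambda_l e_le_i'\neq 0$ some $e_{l_0}e_i'\neq 0$, i.e.\ $e_i'\in\mathcal{S}_{e_{l_0}}$. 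Taking $e_{l_0}$ as the anchor, one has $b\in\mathcal{P}_{e_{l_0}}\mathcal{P}_{e_{l_0}}$, $e_i'\in\mathcal{S}_{e_{l_0}}$ and $0\neq be_i'\in J$, so semi-division gives $e_i'\in\mathcal{I}(be_i')\subseteq J$, and the argument restarts from a basis element of $A_{[[e_{k+1}]]}$ inside $J$. With this substitution your proof is complete; the remaining steps (the initial extraction via $\mathcal{M}_e$, Lemma \ref{penu}, and the $\approx$-propagation) match the paper's argument step for step.
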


\begin{proof}

Let us fix a non-zero ideal $I$ of $A_{[[[e_i]]]}$ and let us show that necessarily $I=A_{[[[e_i]]]}.$

\medskip 

To do that, take some $0 \neq x \in I$. Taking into account  ${\rm Ann}({ A}) \neq 0$, we can find  $e_j \in {\mathcal B}$ satisfying  either $xe_j \neq 0$ or $e_j x \neq 0$. Since  $x=\sum\limits_{t=1}^{m} \lambda_t e_t$ with any $e_t \in {\mathcal B}$ and  $\lambda_t \in {\mathbb K} \setminus \{0\}$, we have either

\begin{equation}\label{Harbin20}
 \hbox{   $0 \neq (\sum\limits_{t=1}^{m} \lambda_t e_t) e_j=y \in { I}$ or $0 \neq e_j(\sum\limits_{t=1}^{m} \lambda_t e_t)=z \in { I}.$}
\end{equation}

  \bigskip 

In the first case. That is 

$$0 \neq (\sum\limits_{t=1}^{m} \lambda_t e_t) e_j=y \in { I},$$
by denoting  $$C=\{t \in \{{1},...,{m}\}: e_te_j \neq 0\},$$  we have
$0 \neq (\sum\limits_{s \in C} \lambda_s e_s)e_j=y \in { I}$, being $\sum\limits_{s \in C} \lambda_s e_s\in {\mathcal M}_{e_j}$. Taking now into account that ${\mathcal B}$ is, in particular, a  weak-division basis we get 
$e_j \in {\mathcal I}(y) \subset { I}.$
In the second case we can argue similarly to get $$e_j \in { I}.$$ 

By observing again that ${\mathcal B}$ is a  weak-division basis, we can apply Lemma \ref{penu}  to obtain that 
\begin{equation}\label{hel7}
{ A}_{[e_j]} \subset { I}.
\end{equation}

\medskip

Let us now show that ${ A}_{[[e_j]]} \subset { I}:$

Take  any  ${ A}_{[e_k]} \subset { A}_{[[e_j]]}$ such that ${ A}_{[e_k]} \neq { A}_{[e_j]}$, since ${ A}_{[e_k]} \approx { A}_{[e_j]}$, there exists
\begin{equation}\label{harbin2}
\{{ A}_{[e_1]},{ A}_{[e_2]},...,{ A}_{[e_n]}\}
\end{equation}

such that ${ A}_{[e_1]}={ A}_{[e_j]}$, ${ A}_{[e_n]}={ A}_{[e_k]}$ and  $${ A}_{[e_j]} { A}_{[e_2]}+ { A}_{[e_2]} { A}_{[e_j]} \neq 0.$$ By Equation (\ref{hel7}), we have
$$0 \neq { A}_{[e_j]} { A}_{[e_2]}+ { A}_{[e_2]} { A}_{[e_j]} \subset {I}.$$

From here, there exist $e_r \in { A}_{[e_j]}$ and $e_s \in { A}_{[e_2]} $ such that either $0 \neq e_r e_s=x \in { I}$ or $0 \neq e_s e_r=y \in { I}$. By the weak-division character of ${\mathcal B}$ we get
either $e_s \in {\mathcal I}(x) \subset { I}$ or  $e_s \in {\mathcal I}(y) \subset I$. By applying Lemma  \ref{penu}, in any case,   we get
$${ A}_{[e_2]} \subset { I}.$$
By iterating this argument with Equation (\ref{harbin2}) we get ${ A}_{[e_n]} ={ A}_{[e_k]} \subset { I}.$  From here, since $${ A}_{[[e_j]]}= \bigoplus\limits_{{ A}_{[e_{k}]} \subset { A}_{[[e_j]]}}{ A}_{[e_{k}]}$$ we get
 $$ { A}_{[[e_j]]} \subset { I}.$$ 

\medskip
Finally, let us prove that ${ A}_{[[[e_j]]]} \subset { I}:$

\medskip

To do that, take some  $A_{[[e_k]]}$ such that $A_{[[e_k]]} \subset { A}_{[[[e_j]]]}$. Hence

\begin{equation}\label{Harbin10}
{ A}_{[[e_k]]} \equiv { A}_{[[e_j]]}. 
\end{equation}
We have to show that   ${ A}_{[[e_k]]}  \subset I.$

By Equation (\ref{Harbin40})  there exists a sequence   
\begin{equation}\label{metalli}
({ A}_{[[e_j]]},{ A}_{[[e_2]]},...,{ A}_{[[e_k]]}) 
\end{equation}

with any $A_{[[e_{r}]]} \in {\mathcal G}$ 
 satisfying conditions in Definition \ref{ane3}. From here, either $\Pi_{{ A}_{[[e_2]]}}({ A}_{[[e_j]]}{ A}_{[[e_j]]}) \neq 0$ or $\Pi_{{ A}_{[[e_j]]}}({ A}_{[[e_2]]}{ A}_{[[e_2]]}) \neq 0$.

 \medskip

In  the first case, Equation (\ref{harbin1}) gives us  there exist $x,y \in { A}_{[[e_j]]}$ such that
$0\neq xy=x_r+\cdots+x_2+\cdots+x_n \in I$ with $0 \neq x_2 \in { A}_{[[e_2]]}$ and $x_p \in { A}_{[[e_p]]}$, $p\in \{1,...,n\},$ with ${ A}_{[[e_p]]} \neq { A}_{[[e_q]]}$ when $p \neq q$. Write now $x=\sum\limits_{s=1}^{n} \lambda_s e_s$ and $y=\sum\limits_{t=1}^{m} \lambda_t e_t$ as their unique expression respect the linear basis ${\mathcal B} \cap { A}_{[[e_j]]}$. Since $xy\neq 0$, there exists $e_s$ and $e_t$ in the above expressions of $x$ and $y$ such that   $$0 \neq e_se_t=z_1+\cdots+z_2+ \cdots+z_n \in I$$  with $0\neq z_2 \in { A}_{[[e_2]]}$  and $x_p \in { A}_{[[e_p]]}$, $p\in \{1,...,n\},$ with ${ A}_{[[e_p]]} \neq { A}_{[[e_q]]}$ when $p \neq q$. From here we can write $$z_2=\sum\limits_{l=1}^{q} \lambda_l e_l$$ with any $e_l \in {\mathcal B} \cap { A}_{[[e_2]]} $, $e_{l_1} \neq e_{l_2}$ when $l_1 \neq l_2$ and $0 \neq \lambda_l \in {\mathbb K}.$ Hence 
$$0 \neq e_se_t=z_1+\cdots+(\sum\limits_{l=1}^{q} \lambda_l e_l)+ \cdots+z_n \in I$$ 
with any $0 \neq \lambda_l \in {\mathbb K}$.  So  $p_{e_l}(e_se_t)=\lambda_l \neq 0$ (see Equation (\ref{Harbin15})) and we get that

\begin{equation}\label{Harbin16}
  e_s,e_t \in {\mathcal P}(e_l) 
\end{equation}
for any $l \in \{1,..., q.\}$

Also, taking into account  ${\rm Ann}({ A}) \neq 0$ and Equation (\ref{orto}) there exists $e_i^{\prime} \in {\mathcal B} \cap  { A}_{[[e_2]]}$ such that $z_2 e_i^{\prime} \neq 0$ or 
$ e_i^{\prime} z_2\neq 0$ and so either $e_{l_0}e_i^{\prime} \neq 0$ or 
$ e_i^{\prime} e_{l_0}\neq 0$ for some $l_0 \in \{1,..., q\}$. From here 

\begin{equation}\label{Harbin17}
e_i^{\prime} \in {\mathcal S}_{e_{l_0}}.  
\end{equation}
Also we have
\begin{equation}\label{Harbin18}
0\neq (e_se_t)e_i^{\prime}=\sum\limits_{l=1}^{q} \lambda_l e_le_i^{\prime}=c_1 \in I
\end{equation}
or 
$$0\neq e_i^{\prime}(e_se_t)=\sum\limits_{l=1}^{q} \lambda_l e_i^{\prime}e_l=c_2 \in I
$$
with $\lambda_{l_0}\neq 0.$

Taking now into account Equations (\ref{Harbin16}), (\ref{Harbin17}) and (\ref{Harbin18}), the semi-divison character of ${\mathcal B}$ gives us $$ 0\neq e_i^{\prime}\in I(c_t) \subset I,$$ $t \in \{1,2\}$.

Since $0\neq e_i^{\prime} \in { A}_{[[e_2]]} \cap I$ we can argue as the beginning of the proof to get $$ { A}_{[[e_2]]} \subset  I.$$

By iterating this process with the sequence given by Equation (\ref{Harbin40}) we get that 

$$ { A}_{[[e_k]]} \subset  I$$
and consequently 

$${ A}_{[[[e_j]]]} \subset { I}$$
as desired. 

\medskip

From here $$I={ A}_{[[[e_j]]]}$$ and ${ A}_{[[[e_j]]]}$ is simple.

\bigskip

Now, let us consider the second possibility in Equation (\ref{Harbin20}). That is,

$$0 \neq e_j(\sum\limits_{t=1}^{m} \lambda_t e_t)=z \in { I}.$$

By arguing as in the above first possibility, we have that there exist 
$e_s,e_t \in {\mathcal B} \cap { A}_{[[e_2]]}  $ and 
$e_{l_0}, e_i^{\prime} \in {\mathcal B} \cap { A}_{[[e_k]]}  $ such that 
$e_s,e_t \in {\mathcal P}(e_{l_0}) $,
 $e_i^{\prime} \in {\mathcal S}_{e_{l_0}}$
and  either

\begin{equation}\label{Harbin30}
    \hbox{$0 \neq (e_se_t) e_i^{\prime}=c_1 \in I$ or $0 \neq  e_i^{\prime}(e_se_t)=c_2 \in I$}
\end{equation}

 By applying the semi-division character of ${\mathcal B} $   to Equation (\ref{Harbin30}) we get 
$$e_se_t =c_3\in I(c_i) \subset I$$
for some $i \in \{1,2\}.$

Now, the (in particular) weak-division character of ${\mathcal B} $ gives us $$e_s,e_t \in I(c_3) \subset I.$$

Since $e_t,e_s \in { A}_{[[e_2]]}$ we conclude as in the first possibility that 
$${ A}_{[[e_2]]} \subset { A}_{[[[e_j]]]}.$$
From here  the proof finishes  as in the first possibility.
\end{proof}

\begin{theorem}\label{Harbin60}
Let $A$ be an arbitrary algebra. Then $A$ is semisimple if and only if  ${\rm Ann}({ A}) \neq 0$ and $A$ admits a   semi-division linear basis. 

\end{theorem}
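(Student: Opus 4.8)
The plan is to prove the two implications separately, in both cases using the canonical decomposition $A=\bigoplus A_{[[[e_i]]]}$ of Equation (\ref{final3}) together with Propositions \ref{Harbin35} and \ref{Harbin40}. For the \emph{sufficiency}, assume ${\rm Ann}(A)\neq 0$ and that $A$ admits a semi-division basis ${\mathcal B}$. Proposition \ref{Harbin35} says that each summand $A_{[[[e_i]]]}$ is an ideal of $A$, Equation (\ref{final2}) says that distinct summands annihilate one another, and Proposition \ref{Harbin40}, whose hypotheses are exactly the present ones, says that each $A_{[[[e_i]]]}$ is simple. Thus Equation (\ref{final3}) displays $A$ as a direct sum of simple ideals, so $A$ is semisimple; this direction is only a collation of what has already been proved.

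For the \emph{necessity}, suppose $A$ is semisimple, say $A=\bigoplus_{k\in K} I_k$ with each $I_k$ a simple ideal. I would first record the orthogonality $I_k I_{k'}\subseteq I_k\cap I_{k'}=\{0\}$ for $k\neq k'$. This gives $Ac=I_k c$ and $cA=c I_k$ for every $c\in I_k$, so that the ideal ${\mathcal I}(c)$ generated by a nonzero $c\in I_k$ inside $A$ coincides with the ideal generated by $c$ inside $I_k$, which is $I_k$ by simplicity; call this the \emph{reduction step}. For the annihilator, note that ${\rm Ann}(A)\cap I_k$ is an ideal of $A$ contained in $I_k$, hence $\{0\}$ or $I_k$, and the latter is impossible since it would force $I_k I_k\subseteq I_k A=0$ against the simplicity of $I_k$; decomposing an arbitrary two-sided annihilating element along the $I_k$ then shows that the annihilator condition ${\rm Ann}(A)\neq 0$ demanded by the statement (and used in Proposition \ref{Harbin40}) is satisfied.

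It remains to produce the basis. Fix a linear basis ${\mathcal B}_k$ of each $I_k$ and set ${\mathcal B}=\bigcup_{k\in K}{\mathcal B}_k$. Using orthogonality, for $e_i\in{\mathcal B}_k$ one checks that ${\mathcal S}_{e_i}\subseteq{\mathcal B}_k$ (whence ${\mathcal M}_{e_i}\subseteq I_k$) and likewise that ${\mathcal P}_{e_i}\subseteq{\mathcal B}_k$ and ${\mathcal P}_{e_i}{\mathcal P}_{e_i}\subseteq I_k$, so that every product occurring in either clause of the semi-division definition lives inside the single summand $I_k$. Whenever such a product equals a nonzero $c\in I_k$, the reduction step gives ${\mathcal I}(c)=I_k$ and both factors already lie in $I_k={\mathcal I}(c)$; this verifies at once the weak-division clause and the ${\mathcal P}_{e_i}{\mathcal P}_{e_i}$-clause. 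Hence ${\mathcal B}$ is a semi-division basis and the necessity follows.

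The step I expect to be the main obstacle is the reduction step: one must guarantee that generating an ideal inside a simple summand $I_k$ produces the same subspace as generating it inside the ambient $A$, so that the simplicity of $I_k$ can be transported to the conclusion ${\mathcal I}(c)=I_k$. This is precisely where the orthogonality $I_k I_{k'}=\{0\}$ is indispensable, and it is the hinge on which both the annihilator computation and the verification of the semi-division axioms turn.
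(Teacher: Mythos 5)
Your proposal is correct and follows essentially the same route as the paper: the sufficiency direction is the same collation of Proposition \ref{Harbin35}, Equation (\ref{final2}) and Proposition \ref{Harbin40}, and for necessity the paper likewise takes ${\mathcal B}=\bigcup_k {\mathcal B}_k$ and uses the orthogonality $I_kI_{k'}=0$ together with simplicity to force ${\mathcal I}(c)=I_k$, the only difference being that the paper cites \cite[Lemma 1]{semi} for the weak-division clause and the annihilator, whereas you prove these directly via your ``reduction step''. One remark: what you (and the paper's own proof) actually establish is ${\rm Ann}(A)=0$, so the condition ``${\rm Ann}(A)\neq 0$'' appearing in the statement is a typo for ${\rm Ann}(A)=0$, and your proof handles the mathematically correct version.
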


\begin{proof}

Suppose $A$ is semisimple. That is 
\begin{equation}\label{Harbin45}
A=\bigoplus\limits_{j\in J}I_j
\end{equation}
with any $I_j$ a simple ideal of $A$.

By \cite[Lemma 1]{semi} we have that ${\mathrm{\ Ann }}({A})=0 $, and that if  for any $j\in J$ we fix some basis ${\mathcal{B}}_{j}$ of $%
{I}_{j}$ and consider the basis of $A$ given by
${\mathcal{B}}:=\dot{\bigcup\limits_{j\in IJ}}{\mathcal{B%
}}_{i}$, we have that ${\mathcal{B}}$ is a weak-division basis of ${A}$. 

Hence, in order to verify ${\mathcal{B}}$ is
a semi-division basis of $A$ we just have to check that the second condition in Definition \ref{Harbin50} holds. To do that,  let us fix some $e\in {\mathcal{B}}$, being $e\in {%
\mathcal{B}}_{j}$ for just one $j\in J.$ By the decomposition (\ref{Harbin45}), we
have ${I}_{j}{I}_{k}=0$ when $j\neq k$ and so the sets  ${%
\mathcal{S}}_{e}
{\mathcal{P}}_{e}\subset {\mathcal{B}}_{j}$ and so in case $%
0\neq (e_re_s) e_t=c$ or $0\neq  e_t (e_re_s)=c$ for some $e_r,e_s\in {\mathcal{P}}_{e}$ and $e_s \in {\mathcal{S}}_{e}$ then $c\in
{I}_{j}$ and so $e_re_s, e_t\in {I}_{j}={{I}}(c)$ by the
simplicity of ${I}_{j}$. From here ${\mathcal{B}}$ is a semi-division basis of $A$.

\bigskip

Suppose now ${\rm Ann}({ A}) \neq 0$ and $A$ admits a   semi-division linear basis. By Equation (\ref{final1}) and Proposition \ref{Harbin35} we have that  $A$ is the direct sum of the ideals

$${A}=\bigoplus\limits_{[A_{[[e_{i}]]}] \in ({\mathcal{G}} / \equiv)} A_{[[[e_i]]]}.$$

Now Proposition \ref{Harbin40} allows us to assert that any  $A_{[[[e_i]]]}$ is simple. From here $A$ is semisimple.

\end{proof}


\begin{corollary}
Let ${A}$ be an arbitrary algebra. Then ${A}$ is
simple if and only if ${\rm Ann}({ A})=0$  and ${A}$ admits an i-division
basis.
\end{corollary}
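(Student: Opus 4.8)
The plan is to prove the corollary by reducing it to the already-established Theorem \ref{Harbin60}, exploiting the observation noted early in the paper that every $i$-division basis is automatically a weak-division basis (and, as I will argue, a semi-division basis). The statement is an equivalence, so I would prove each implication separately, and I expect the forward direction (simplicity implies the two stated conditions) to be the routine one, while the reverse direction (the two conditions imply simplicity) will carry the real content.

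For the forward implication, assume $A$ is simple. Since the product of $A$ is non-zero, the set ${\rm Ann}(A)$ is a proper ideal of $A$; being proper and $A$ having only the ideals $\{0\}$ and $A$, we conclude ${\rm Ann}(A)=0$. To exhibit an $i$-division basis, I would take \emph{any} linear basis ${\mathcal B}=\{e_i\}_{i\in I}$ of $A$. Given $e_i\in{\mathcal B}$ and $x\in A$ with $0\neq e_ix=c$ or $0\neq xe_i=c$, the element $c$ is non-zero, so the ideal ${\mathcal I}(c)$ generated by $c$ is a non-zero ideal of the simple algebra $A$, forcing ${\mathcal I}(c)=A$. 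Consequently $e_i,x\in A={\mathcal I}(c)$ trivially, so ${\mathcal B}$ is an $i$-division basis. Thus both conditions hold.

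For the reverse implication, assume ${\rm Ann}(A)=0$ and that $A$ admits an $i$-division basis ${\mathcal B}$. The key point is that an $i$-division basis is in particular a semi-division basis: the weak-division condition is immediate since ${\mathcal M}_{e_i}\subset A$, and the extra semi-division condition involving ${\mathcal P}_{e_i}{\mathcal P}_{e_i}$ follows the same way because the defining implication of an $i$-division basis quantifies over \emph{all} $x\in A$, hence applies to any product $b\in A$ and any basis element $e_j$. Therefore Theorem \ref{Harbin60} applies (with the hypothesis read as ${\rm Ann}(A)=0$, correcting the apparent typo in its statement), and $A$ decomposes as a direct sum of simple ideals, $A=\bigoplus_{j\in J}I_j$. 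It remains to show this sum has a single summand. Here I would argue that if there were two distinct simple ideals $I_{j_1}$ and $I_{j_2}$ in the decomposition, then $I_{j_1}$ would be a proper non-zero ideal of $A$; but the $i$-division property forces any non-zero product $0\neq e_i x=c$ to satisfy $e_i,x\in{\mathcal I}(c)$, and by chasing a non-zero product through a basis element of $I_{j_1}$ (using ${\rm Ann}(A)=0$ to guarantee such a product exists) one shows ${\mathcal I}(c)$ contains generators spanning all of $A_{[[[e_i]]]}=A$, contradicting properness.

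The main obstacle I anticipate is the final step of the reverse direction: cleanly ruling out more than one simple summand. The semisimplicity yielded by Theorem \ref{Harbin60} guarantees a decomposition into simple ideals, but collapsing it to a single ideal requires using the \emph{global} reach of the $i$-division condition — the fact that $e_i,x\in{\mathcal I}(c)$ with no restriction confining $x$ to ${\mathcal M}_{e_i}$ or ${\mathcal P}_{e_i}{\mathcal P}_{e_i}$. The cleanest route is probably to show directly that under the $i$-division hypothesis every single block $A_{[[[e_i]]]}$ of the decomposition (\ref{final3}) already equals $A$, i.e.\ that the equivalence $\equiv$ on ${\mathcal G}$ has exactly one class; this makes the simplicity of $A$ coincide with the simplicity of its unique block, which Proposition \ref{Harbin40} provides once ${\rm Ann}(A)=0$ is in force.
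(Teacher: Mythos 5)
Your forward direction and your reduction of the reverse direction to Theorem \ref{Harbin60} (via the observation that an $i$-division basis is automatically a semi-division basis, and the correction of the typo ${\rm Ann}(A)\neq 0$ in that theorem's statement) match the paper and are fine. The gap is in the one step that carries the real content of the corollary: collapsing the decomposition $A=\bigoplus_{j\in J}I_j$ to a single summand. You correctly identify that the unrestricted quantification over $x\in A$ in the $i$-division condition is what must be exploited, but neither of your two sketches closes the argument. If you take $e_i$ and $x$ both inside $I_{j_1}$ with $0\neq e_ix=c$, then ${\mathcal I}(c)\subset I_{j_1}$ and the conclusion $e_i,x\in{\mathcal I}(c)$ is perfectly consistent with $I_{j_1}$ being a proper ideal, so no contradiction appears and ${\mathcal I}(c)$ does not acquire ``generators spanning all of $A$.'' Your proposed ``cleanest route'' of showing that $\equiv$ has a single class cannot work either: if $A$ genuinely split as $I_{j_1}\oplus I_{j_2}$ with $I_{j_1}I_{j_2}=I_{j_2}I_{j_1}=0$, the blocks $A_{[[[e_i]]]}$ would be distinct by the very construction of $\approx$ and $\equiv$, so the contradiction has to come from outside that machinery.

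The missing idea is a test element that straddles two summands. The paper picks $e_j,e_j'\in{\mathcal B}\cap I_{j}$ with $e_je_j'=c\neq 0$ (possible because simplicity of $I_j$ forces ${\rm Ann}(I_j)=0$) and $0\neq e_k\in{\mathcal B}\cap I_k$ with $k\neq j$, and applies the $i$-division condition to $x=e_j'+e_k$: since $e_je_k=0$, one gets $0\neq e_jx=c\in I_j$, whence $e_j'+e_k\in{\mathcal I}(c)\subset I_j$, contradicting $e_k\notin I_j$. With that single construction inserted at the end, your proof is complete and coincides with the paper's.
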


\begin{proof}

Suppose $A$ is simple. Since  ${\rm Ann}({ A})$ is an ideal of $A$ then ${\rm Ann}({ A})=0$. Observe now that any basis ${\mathcal B}$ of $A$ is of $i$-division as consequence of the only non-zero ideal of $A$ is $A$.

\medskip

Conversely, suppose now that $A$ admits an $i$-division basis ${\mathcal B}=\{e_i\}_{i \in I}$ and ${\rm Ann}({ A})=0$. First we note that any $i$-division basis is always a semi-division basis. Hence Theorem  
\ref{Harbin60} gives us 

$$A=\bigoplus\limits_{j\in J}I_j$$
with any $I_j$ a simple ideal of $A$ and with ${\mathcal B} \cap I_j$ a basis of $I_j$ for any $j \in J$. In case there exist $j,k \in J$ with $j \neq k$, we can take $0\neq e_j , e_j ^{\prime}\in {\mathcal B} \cap I_j$ such that $e_j e_j^{\prime}=c\neq 0$ (because ${\rm Ann}({ I_j})=0$); and $0\neq e_k \in {\mathcal B} \cap I_k$. Then we have $0\neq e_j(e_j ^{\prime} + e_k)=c \in I_j$. By the $i$-division character of ${\mathcal B}$ we get $e_j ^{\prime} + e_k \in I(c) \subset I_j.$ A contradiction. From here the cardinal of $J$ is one and so $A$ is simple.
\end{proof}


\end{document}